\documentclass{article}
\usepackage{amssymb,amsmath,amsthm, mathrsfs}
\usepackage{graphicx, float,color}
\usepackage[encapsulated]{CJK}
\usepackage[english]{babel}
\usepackage[colorlinks=true]{hyperref}
\hypersetup{urlcolor=blue, linkcolor=red, citecolor=blue}
\usepackage{geometry} 
\geometry{a4paper} 
\usepackage{array} 
\usepackage{paralist} 
\usepackage{verbatim} 
\usepackage[all]{xy}
\newtheorem{theorem}{Theorem}[section]
\newtheorem {lemma}[theorem]{Lemma}
\newtheorem {proposition}[theorem]{Proposition}
\newtheorem {corollary}[theorem]{Corollary}


\newtheorem {example}[theorem]{Example}
\newtheorem {remark}[theorem]{Remark}
\newtheorem {question}[theorem]{Question}
\usepackage{sectsty}
\allsectionsfont{\sffamily\mdseries\upshape} 

\title{A note on disjoint hypercyclicity for invertible bilateral pseudo-shifts on $\ell^{p}(\mathbb{Z})$}
\author{Song-Ung Ri, Hyon-Hui Ju*, Jin-Myong Kim}

\date{}
\begin{document}
\maketitle
{\makeatletter\renewcommand*\@makefnmark{}\footnotetext{

*Corresponding author E-mail address:
hh.ju0524@ryongnamsan.edu.kp(HyonHui Ju) }\makeatother}

\centerline{Faculty of Mathematics, {\bf Kim Il Sung} University,}

\centerline{Pyongyang, Democratic People's Republic of Korea}

\begin{abstract}
 We give a sufficient condition for bilateral pseudo-shifts on $\ell^{p}(\mathbb{Z})$ to be disjoint hypercyclic and concrete examples. Next we partially answer the open problem posed by Martin, Menet and Puig (Disjoint frequently hypercyclic pseudo-shifts, J. Funct. Anal., \textbf{283} (1), 109474, 2022)\cite{MMP22} concerned with disjoint reiterative hypercyclicity, that is, we show that  as for the operators on a reflexive Banach space, reiteratively hypercyclic ones are disjoint hypercyclic if and only if they are disjoint reiteratively hypercyclic.
\end{abstract}

\vskip0.6cm\noindent
{\bf Keywords}  disjoint hypercyclicity, weighted shifts, pseudo-shifts, disjoint reiterative hypercyclicity.

\vskip0.6cm\noindent
{\bf Mathematics Subject Classification(2020)}  47A16, 47B37.

\section{Introduction}

Pseudo-shifts is a generalization of weighted shifts which play an important role in linear dynamics  offering many examples and counterexamples in this field. Since the role of weighted shifts is very limited in joint dynamics (in fact, they can never be disjoint weakly mixing and, thus, never be disjoint mixing nor satisfy the Disjoint Hypercyclicity Criterion; see \cite{BMS14}), several authors studied joint dynamics of pseudo-shifts which can support a wider range of disjoint dynamical properties than the weighted shifts.

\c{C}olako\u{g}lu,  Martin and Sanders \cite{CMS20}  characterized disjoint hypercyclicity and disjoint weak mixing for unilateral pseudo-shifts on $\ell^{p}(\mathbb{N})$ with $1\leq p <\infty$. Martin, Menet and  Puig \cite{MMP22}  obtained a Disjoint Frequent Hypercyclicity Criterion and showed that it characterizes disjoint frequent hypercyclicity for a family of unilateral pseudo-shifts on $c_0(\mathbb{N})$ and $\ell^{p}(\mathbb{N})$, $1\leq p <\infty$.

On the other hand, we can also find a limited role of weighted shift operators in joint dynamics in a result of  B\`{e}s, Martin and  Sanders \cite{BMS14}. In \cite{BMS14}, it was proved that for any tuple of bilateral weighted shifts on $\ell^{p}(\mathbb{Z})$ with $1\leq p <\infty$, whenever one of the tuple is invertible, the tuple fails to be disjoint hypercyclic.
Nevertheless B\`{e}s and Peris \cite{BP07} provided an equivalent condition for a tuple of powers of bilateral weighted shifts on $\ell^{p}(\mathbb{Z})$ with $1\leq p <\infty$ to be disjoint hypercyclic. As powers of weighted shifts are pseudo-shifts, this means that as for the pseudo-shifts, there exist some tuples of invertible ones on $\ell^{p}(\mathbb{Z})$ with $1\leq p <\infty$, such that they are disjoint hypercyclic.
 
Then, it is natural to ask what the conditions need for the tuple of  bilateral pseudo-shifts on $\ell^{p}(\mathbb{Z})$ to be disjoint hypercyclic. 

In this note we give a sufficient condition for bilateral pseudo-shifts on $\ell^{p}(\mathbb{Z})$ with $1\leq p <\infty$ to be disjoint hypercyclic. Using this condition, we provide some examples of invertible pseudo-shifts which are disjoint hypercyclic, especially an example which one of them can not be described as powers of weighted shift, but they are disjoint hypercyclic

At the end of this note we discuss the open problem from Martin, Menet and  Puig \cite{MMP22}.
In \cite{MMP22}, they showed that two reiteratively hypercyclic unilateral pseudo-shifts  on $\ell^{p}(\mathbb{N})$ with the same inducing map are disjoint hypercyclic if and only if they are disjoint reiteratively hypercyclic, and then they asked:

\begin{question}\label{qest1.1}\textnormal{(Question 5.1 of \cite{MMP22})
Is it possible to find two reiteratively hypercyclic operators $T_{1}$, $T_{2}$ which are disjoint hypercyclic but not disjoint reiteratively hypercyclic? Can $T_{1}$ and $T_{2}$ be taken as unilateral weighted shifts on $c_{0}(\mathbb{N})$?}
\end{question}

We show that on a reflexive Banach space it is impossible, that is, reiteratively hypercyclic operators on a reflexive Banach space are disjoint hypercyclic if and only if they are disjoint reiteratively hypercyclic.

The organization of the paper is as follows.
In Section 2, we give a sufficient condition for pseudo-shifts on $\ell^{p}(\mathbb{Z})$ with $1\leq p <\infty$ to be disjoint hypercyclic and concrete examples. In Section 3, we discuss Question \ref{qest1.1}.

In the rest of Introduction, let us fix some notations and terminology.

Let $X$ denote a separable infinite-dimensional Banach space over the real or complex scalar field $\mathbb{K}$ and $\mathcal{L}(X)$ denote the algebra of continuous linear operators on $X$. We say that an operator $T$ in $\mathcal{L}(X)$ is \textit{hypercyclic} if there exists a vector $x \in X$ (also called \textit{hypercyclic}) such that its orbit $ Orb(x, T) =\{T^{n}x:n\in {\mathbb N} \}$ is dense in $X$. Equivalently, there exists a vector $x \in X$ such that for any non-empty open set $U \subset X$ the return set $N(x,U) := \{n \in {\mathbb{N}}:T^{n}x \in U \}$ is non-empty. We will denote by $HC(T)$ the set of hypercyclic vectors for $T$.

Disjointness in hypercyclicity is introduced independently by Bernal \cite{B07} and by B\`{e}s and Peris \cite{BP07}     in 2007.
For $N\geq 2$, operators $T_{1}$,...,$T_{N}\in \mathcal{L}(X)$ are called \textit{disjoint hypercyclic} if the direct sum operator $T_{1}\oplus \cdots \oplus T_{N}$ has a hypercyclic vector of the form $(x,...,x)\in X^{N}$. Such a vector $x\in X$ is called a \textit{disjoint hypercyclic vector} for $T_{1},...,T_{N}$.

And recall the notions of reiterative recurrence, reiterative hypercyclicity and disjoint reiterative hypercyclicity.
The \textit{upper Banach density} of a set $A\subset \mathbb{N}$ is defined by
\[\overline{Bd} (A)=\lim_{N \rightarrow \infty}\limsup_{m \rightarrow \infty}\frac{\# (A\cap [m+1, m+N])}{N}\]
where $\# (\cdot)$ means the cardinal number of $(\cdot)$. A vector $x\in X$ is said to be \textit{reiteratively recurrent} (resp. \textit{reiteratively hypercyclic}) if $\overline{Bd}(N(x,U))>0$ for every neighborhood $U$ of $x$ (resp. for every non-empty open subset $U$ of X). We will denote by $RRec(T)$ (resp. $RHC(T)$) the set of such points and we say that $T$ is \textit{reiteratively recurrent} (resp. \textit{reiteratively hypercyclic}) if $RRec(T)$ is dense in $X$ (resp. if $RHC(T)\neq \emptyset$).
And we say that $T_{1}$,...,$T_{N}$ are \textit{disjoint reiteratively hypercyclic} if the direct sum operator $T_{1}\oplus \cdots \oplus T_{N}$ has a reiteratively hypercyclic vector of the form $(x,...,x)\in X^{N}$. Such a vector $x\in X$ is called $disjoint$ $reiteratively$ $hypercyclic$.

Recall that for a bounded weight sequence $(w_{n})_{n\in \mathbb{N}}$ (resp.$(w_{n})_{n\in \mathbb{Z}}$) of nonzero scalars, the unilateral (resp. bilateral) weighted shift $B_{w}$ on $\ell^{p}(\mathbb{N})$ (resp. $\ell^{p}(\mathbb{Z})$) is defined by $B_{w}e_{1}=0$ and $B_{w}(e_{n})=w_{n}e_{n-1}$ for $n\geq 2$ (reap. $B_{w}(e_{n})=w_{n}e_{n-1}$ for $n\in \mathbb{Z}$), where $(e_{n})_{n}$ is the canonical basis.
Grosse-Erdmann \cite{G00} defined pseudo-shifts as a generalization of weighted shifts. We fix the definition of bilateral pseudo-shifts according to \cite{G00} as follows:\\

\noindent\textbf{Definition 1.3.}
Let $X=\ell^{p}(\mathbb{Z})$ with $1\leq p< \infty$, $w=(w_{i})_{i\in \mathbb{Z}}$ be a bounded, nonzero bilateral weight sequence, and let $f : \mathbb{Z}\rightarrow \mathbb{Z}$ be an invertible map. The \emph{bilateral pseudo-shift} $T_{f,w}:X \rightarrow X$ induced by the \emph{inducing map} $f$ and the weight sequence $w=(w_{i})_{i\in \mathbb{Z}}$, is the linear operator given by

\[T_{f,w}(\sum^{\infty}_{-\infty}x_{j}e_{j})=\sum^{\infty}_{-\infty}w_{f_{(j)}}x_{f_{(j)}}e_{j},\]
where $(e_{j})_{j\in \mathbb{Z}}$ is the canonical basis of $X$.\\

Bilateral weighted shifts are bilateral pseudo-shifts with the inducing map $f(n)=n+1$ with $n\in \mathbb{Z}$ and a weighted shift raised to the power $r\in \mathbb{N}$ is a pseudo-shift with the inducing map $f(n)=n+r$ with $n\in \mathbb{Z}$.
Obviously, a bilateral pseudo-shift is invertible if and only if $\inf_{i\in \mathbb{Z}}|w_{i}|>0$ and $\sup_{i\in \mathbb{Z}}|w_{i}|<\infty$.

\section{Disjoint hypercyclic bilateral pseudo-shifts}

In this section, we first provide a sufficient condition for bilateral pseudo-shifts to be disjoint hypercyclic. Then we give some examples of invertible bilateral pseudo-shifts which are disjoint hypercyclic. 

Strong Disjoint Blow-up/Collapse Property gives us an useful way to show that operators are disjoint hypercyclic. We say the operators $T_{1}$,...,$T_{N}\in  \mathcal{L}(X)$ with $N\geq 2$ satisfy the $Strong$ $Disjoint$ $Blow-up/Collapse$ $Property$ if for each integer $L\in \mathbb{N}$ and for any non-empty open sets $W$, $U_{1-L}$, ... , $U_{0}$, $U_{1}$, ... ,$U_{N}$ of $X$ with $0\in W$, there exists an integer $n\in \mathbb{N}$ such that
$W\cap T^{-n}_{1}(U_{1})\cap \cdots \cap T^{-n}_{N}(U_{N})\neq \emptyset$ and $U_{l}\cap T^{-n}_{1}(W)\cap \cdots \cap T^{-n}_{N}(W)\neq \emptyset$  for integers $1-L\leq l \leq 0$.
It was checked that if the operators $T_{1}$,...,$T_{N}\in  \mathcal{L}(X)$ satisfy Strong Disjoint Blow-up/Collapse Property then they are disjoint hypercyclic by Salas \cite{S13}.
\c{C}olako\u{g}lu,  Martin and Sanders \cite{CMS20} provided a sufficient condition for operators to satisfy Strong Disjoint Blow-up/Collapse Property as follows.

\begin{theorem}{\label{them2.1}}\textnormal{(Theorem 2.2 of \cite{CMS20}) (Disjoint Blow-up/Collapse Criterion)}
For operators $T_{1}$,...,$T_{N}\in  \mathcal{L}(X)$ with $N\geq 2$, suppose there exist a strictly increasing sequence $(n_{k})$ of positive integers, a dense subset $X_{0}$ of $X$ and maps $S_{k}:\oplus^{N}_{i=1}X_{0}\rightarrow X$ which satisfy the following:

$(i)$ for each vector $x\in X_{0}$ and integer $i$ with $1\leq i\leq N$, we have $T^{n_{k}}_{i}x\rightarrow 0$ as $k\rightarrow \infty$,

$(ii)$ for each $\varepsilon > 0$, integer $k\in \mathbb{N}$ and vectors $x_{1}$,..., $x_{N}\in X_{0}$, there exists an integer $k\geq K$ such that

$\;\;$ $(a)$ $\|S_{k}(x_{1},...,x_{N})\|<\varepsilon$ and

$\;\;$ $(b)$ $\|T^{n_{k}}_{i}S_{k}(x_{1},...,x_{N})-x_{i}\|<\varepsilon$, for integers $1\leq i\leq N$.

Then the operators $T_{1}$,...,$T_{N}$ satisfy the Strong Disjoint Blow-up/Collapse Property, and hence possess a dense disjoint hypercyclic manifold.
\end{theorem}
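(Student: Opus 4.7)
The plan is to verify the Strong Disjoint Blow-up/Collapse Property directly from the hypotheses; disjoint hypercyclicity then follows from the result of Salas recalled just above, and the existence of a dense disjoint hypercyclic manifold follows from the standard strengthening of that argument via a Baire category construction on a countable basis of open sets. Given $L\in\mathbb{N}$ and non-empty open sets $W,U_{1-L},\ldots,U_0,U_1,\ldots,U_N$ with $0\in W$, the goal is to produce one index $n=n_k$ from the given sequence $(n_k)$ that simultaneously witnesses all $N+L$ intersection conditions in the definition of SDBCP.

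First I prepare the data. Using density of $X_0$, pick $u_l\in U_l\cap X_0$ and $\delta_l>0$ with $B(u_l,\delta_l)\subset U_l$ for $1-L\le l\le 0$, pick $y_i\in U_i\cap X_0$ and $\delta_i>0$ with $B(y_i,\delta_i)\subset U_i$ for $1\le i\le N$, pick $r>0$ with $B(0,r)\subset W$, and set $\varepsilon:=\min\{r,\delta_1,\ldots,\delta_N\}$. Condition (i), applied to the finite list $\{u_l\}_{l=1-L}^{0}$, yields a threshold $K$ such that $\|T_i^{n_k}u_l\|<r$ for every $k\ge K$, every $l$ and every $i$; consequently, each $u_l$ itself lies in $U_l\cap T_1^{-n_k}(W)\cap\cdots\cap T_N^{-n_k}(W)$ for all $k\ge K$, taking care of the $L$ ``collapse'' requirements at any such $n_k$.

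With this threshold $K$ now fixed, I invoke condition (ii) on $(y_1,\ldots,y_N)$ with $K$, obtaining some $k\ge K$ for which $z:=S_k(y_1,\ldots,y_N)$ satisfies $\|z\|<\varepsilon\le r$ and $\|T_i^{n_k}z-y_i\|<\varepsilon\le\delta_i$ for $1\le i\le N$. The first inequality places $z$ in $W$, while the second places $T_i^{n_k}z$ in $B(y_i,\delta_i)\subset U_i$, so $z$ witnesses the ``blow-up'' condition $W\cap T_1^{-n_k}(U_1)\cap\cdots\cap T_N^{-n_k}(U_N)\ne\emptyset$ at the same index $n_k$ chosen above.

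The only delicate point is the coordination of the two parts at a common $n_k$: one must extract the threshold $K$ from (i) \emph{before} invoking (ii), and this is precisely why the hypothesis (ii) must permit $k$ to be chosen arbitrarily large rather than fixed in advance. No approximation by $S_k$ is needed in the collapse direction, because the chosen vectors $u_l$ themselves lie in $X_0$ and (i) applies to them directly. Apart from this bookkeeping the argument reduces to a routine $\varepsilon$-estimate, so I do not anticipate a serious obstacle beyond laying out the order of quantifiers carefully.
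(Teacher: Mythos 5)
Your proof is correct. Note that the paper itself gives no proof of this statement: it is quoted verbatim (as Theorem 2.2 of \cite{CMS20}) and the implication from the Strong Disjoint Blow-up/Collapse Property to disjoint hypercyclicity is attributed to Salas \cite{S13}, so there is no in-paper argument to compare against. Your derivation is the standard one: you correctly read the hypothesis ``integer $k\in\mathbb{N}$'' in $(ii)$ as a typo for ``integer $K\in\mathbb{N}$'' (otherwise the clause ``there exists an integer $k\geq K$'' is ill-formed), you use condition $(i)$ on the finitely many $X_0$-representatives of $U_{1-L},\dots,U_0$ to fix the threshold $K$ first, and only then invoke $(ii)$ to obtain a common index $n_k$ witnessing both the blow-up and collapse conditions --- which is exactly the quantifier coordination the criterion is designed to permit. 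The only part left at the level of a citation is the passage from disjoint hypercyclicity to a \emph{dense disjoint hypercyclic manifold}, but that is also how the paper treats it, deferring to \cite{S13}.
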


The Disjoint Blow-up/Collapse Criterion has an equivalent condition as follows.

\begin{remark}{\label{rmk2.2}}\textnormal{(Remark 2.3 of \cite{CMS20})
The operators $T_{1},...,T_{N}\in \mathcal{L}(X)$ with $n\geq 2$ satisfy the Disjoint Blow-up/Collapse Criterion if and only if there is a dense set $X_{0}$ such that for each $\varepsilon> 0$, integers $K,L\in \mathbb{N}$ and vectors $y_{1},...y_{L},x_{1},...,x_{N}\in X_{0}$, there exists an integer $n\geq K$ and a vector $z\in X$ with $\|z\|< \varepsilon$ such that}
\textnormal{
\[\|T^{n}_{i}y_{\ell}\|<\varepsilon \textrm{ and } \|T^{n}_{i}z-x_{i}\|<\varepsilon \textrm{ for integers } 1\leq i\leq N \textrm{ and } 1\leq \ell \leq L.\]
}
\end{remark}

We are now ready to discuss a sufficient condition for bilateral pseudo-shifts to be disjoint hypercyclic. In \cite{CMS20}, they characterized disjoint hypercyclic unilateral pseudo-shifts on $\ell^{p}(\mathbb{N})$. Here we only give a sufficient condition for bilateral pseudo-shifts on $\ell^{p}(\mathbb{Z})$ to be disjoint hypercyclic. Before giving the theorem, we provide a useful lemma to prove the theorem.

\begin{lemma}{\label{lemma2.3}}
Let $1\leq p <\infty$ and $N\geq2$. For each integer $i$ with $1\leq i \leq N$, let $T_{i}=T_{f_{i},w^{(i)}}$ be the bilateral pseudo-shifts on $\ell^{p}(\mathbb{Z})$ induced by the map $f_{i}: \mathbb{Z}\rightarrow \mathbb{Z}$ and the weight sequence $w^{(i)}=(w^{(i)}_{m})_{m\in \mathbb{Z}}$, and let $W^{(i)}_{m,n}=\prod^{n}_{v=1}w^{(i)}_{f^{v}_{i}(m)}$.
For integer $n,M\in \mathbb{N}$ and vectors $x_{1}$,...,$x_{N}$ given by $x_{i}=\sum^{M}_{m=-M}a^{(i)}_{m}e_{m}$ with $a^{(i)}_{-M}$,...,$a^{(i)}_{M}\neq 0$, there exists a vector $z\in$ span$\{e_{m}: m\in \mathbb{Z}\}$ such that

\[\|z\| \leq (2M+1)N\Gamma \max\{|W^{(\ell)}_{m,n}|: 1\leq \ell \leq N, 1\leq m\leq M\},\]
\[\|T^{n}_{1}z_{n}-x_{1}\| \leq (2M+1)N\Gamma \max\left\{\left|\frac{W^{(1)}_{f^{-n}_{1}(j),n}}{W^{(\ell)}_{f^{-n}_{\ell}(j),n}}\right|: 2\leq \ell \leq N, j\in f^{n}_{\ell}([M])\cap f^{n}_{1}(\mathbb{Z}\setminus[M])\right\},\]
and for each integer $i$ with $2\leq i\leq N$, we have

\[\|T^{n}_{i}z-x_{i}\|\leq (2M+1)N\Gamma\max\left\{\left|\frac{W^{(i)}_{f^{-n}_{i}(j),n}}{W^{(\ell)}_{f^{-n}_{\ell}(j),n}}-
\frac{a^{(i)}_{f^{-n}_{i}(j)}}{a^{(\ell)}_{f^{-n}_{\ell}(j)}}\right|:1\leq \ell \leq i-1, j\in f^{n}_{\ell}{([M])}\cap f^{n}_{i}{([M])}\right\}\]
\[+(2M+1)N\Gamma\max\left\{\left|\frac{W^{(i)}_{f^{-n}_{i}(j),n}}{W^{(\ell)}_{f^{-n}_{\ell}(j),n}}\right|: \ell\neq i, j\in f^{n}_{\ell}{([M])}\cap f^{n}_{i}{(\mathbb{Z}\setminus[M])}\right\},\]
where $\Gamma=\max\{\|x_{i}\|:1\leq i\leq N\}$, $W^{(i)}_{m,n}=\prod^{n}_{v=1}w^{(i)}_{f^{v}_{i}(m)}$ and $[M]=\{-M,...,0,...,M\}$ with $M\in\mathbb{N}$.
\end{lemma}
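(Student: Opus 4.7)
The strategy is to construct $z$ explicitly as a finite linear combination of basis vectors $e_k$ indexed by $k$ in the union $\bigcup_{\ell=1}^{N} f_\ell^{n}([M])$, choosing each coefficient so that $T_i^{n} z$ agrees with $x_i$ on $[M]$ for as many $i$ as possible, and then reading the residual error off the explicit formula. The key identity, proved by induction from the definition of $T_{f,w}$, is
\[
(T_{f_i,w^{(i)}}^{n} z)_j \;=\; W^{(i)}_{j,n}\, z_{f_i^{n}(j)},
\]
so specifying $z$ at position $f_i^{n}(j)$ determines $(T_i^{n} z)_j$.

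\textbf{Construction.} For every $k \in \bigcup_{\ell=1}^{N} f_\ell^{n}([M])$, define the \emph{priority} $\ell^{*}(k) := \min\{\ell : k \in f_\ell^{n}([M])\}$ and the \emph{preimage} $m^{*}(k) := f_{\ell^{*}(k)}^{-n}(k)$, and set
\[
z \;:=\; \sum_{k\, \in\, \bigcup_\ell f_\ell^n([M])} \frac{a^{(\ell^{*}(k))}_{m^{*}(k)}}{W^{(\ell^{*}(k))}_{m^{*}(k),n}}\, e_k,
\]
a sum with at most $N(2M+1)$ nonzero terms. The norm bound on $\|z\|$ follows because each of these at most $N(2M+1)$ coefficients has magnitude at most $\Gamma$ times the relevant maximum over the weight factors $W^{(\ell)}_{m,n}$, and applying the $\ell^{p}$ triangle inequality term by term eats the support-size factor.

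\textbf{Bound on $\|T_1^{n} z - x_1\|$.} Since $\ell = 1$ is always the top priority, $\ell^{*}(f_1^{n}(j)) = 1$ for every $j \in [M]$, so $(T_1^{n} z)_j = a^{(1)}_j$ there; the error is supported on $\mathbb{Z} \setminus [M]$. A coordinate $j$ outside $[M]$ contributes only when $f_1^{n}(j) \in f_\ell^{n}([M])$ for some $\ell \geq 2$, and substituting the definition of $z$ produces a term of magnitude at most $\Gamma\,|W^{(1)}_{f_1^{-n}(j'),n}/W^{(\ell)}_{f_\ell^{-n}(j'),n}|$, where $j' := f_1^{n}(j)$. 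There are at most $N(2M+1)$ such coordinates, giving the claimed bound.

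\textbf{Bound on $\|T_i^{n} z - x_i\|$ for $i \geq 2$.} Split the support of $T_i^{n} z - x_i$ in two parts. (a) \emph{Collisions inside $[M]$}: for $j \in [M]$ either $\ell^{*}(f_i^{n}(j)) = i$ (no collision, no error) or some $\ell < i$ owns $f_i^{n}(j)$, in which case the coefficient at $f_i^{n}(j)$ was set from $(\ell, f_\ell^{-n}(f_i^{n}(j)))$ rather than from $(i,j)$, and algebra writes the error as $a^{(\ell)}_{f_\ell^{-n}(j')}\bigl(W^{(i)}_{f_i^{-n}(j'),n}/W^{(\ell)}_{f_\ell^{-n}(j'),n} - a^{(i)}_{f_i^{-n}(j')}/a^{(\ell)}_{f_\ell^{-n}(j')}\bigr)$ with $j' = f_i^{n}(j)$, producing the first max. (b) \emph{Spillover outside $[M]$}: for $j \notin [M]$ a nonzero contribution forces $f_i^{n}(j) \in f_\ell^{n}([M])$ for some $\ell \neq i$ (the case $\ell = i$ would put $j$ in $[M]$), yielding the second max. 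Each piece of the error vector is supported on at most $N(2M+1)$ coordinates, so the $\ell^{p}$ triangle inequality produces the stated inequality.

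\textbf{Main obstacle.} The whole argument is a careful index-translation exercise; the delicate step is organizing the priority rule so that inside-$[M]$ errors involve only pairs $\ell < i$ (matching the first max), while the outside-$[M]$ spillover ranges over all $\ell \neq i$ (matching the second max), and verifying that these two pieces exhaust every coordinate of $T_i^{n} z - x_i$ without double counting.
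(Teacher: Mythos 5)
Your construction is correct and is essentially the argument the paper has in mind: the paper omits the proof of Lemma \ref{lemma2.3}, deferring to Lemma 2.5 of \cite{CMS20}, and that proof uses exactly your priority rule (assign each index $k\in\bigcup_{\ell}f_{\ell}^{n}([M])$ to the smallest $\ell$ with $k\in f_{\ell}^{n}([M])$ and set $z_{k}=a^{(\ell)}_{f_{\ell}^{-n}(k)}/W^{(\ell)}_{f_{\ell}^{-n}(k),n}$), followed by the same coordinate-by-coordinate case analysis you carry out. One remark: your construction actually yields $\|z\|\le (2M+1)N\Gamma\max\{|W^{(\ell)}_{m,n}|^{-1}: 1\le\ell\le N,\ -M\le m\le M\}$, not the bound with $\max|W^{(\ell)}_{m,n}|$ (and the range $1\le m\le M$) printed in the statement; that printed form is a typo, since the proof of Theorem \ref{them2.4} applies the lemma precisely in the reciprocal form your argument establishes.
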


We will not give the proof of Lemma \ref{lemma2.3}, since it is very similar to that of Lemma 2.5 in \cite{CMS20}.

\begin{theorem}\label{them2.4}
Let $1\leq p <\infty$ and $N>2$. For each integer $i$ with $1\leq i \leq N$, let $T_{i}=T_{f_{i},w^{(i)}}$ be the bilateral pseudo-shifts on $\ell^{p}(\mathbb{Z})$ induced by the map $f_{i}: \mathbb{Z}\rightarrow \mathbb{Z}$ and the weight sequence $w^{(i)}=(w^{(i)}_{m})_{m\in \mathbb{Z}}$, and let $W^{(i)}_{m,n}=\prod^{n}_{v=1}w^{(i)}_{f^{v}_{i}(m)}$.
Suppose that there exists a strictly increasing sequence $(n_{k})_{k\in \mathbb{N}}$ of positive integers which satisfies the following:

$\;\;$        $(a)$ for each integer $i\in \mathbb{N}$ with $1\leq i\leq N$ and $m\in \mathbb{Z}$ we have $\left|W^{(i)}_{m, n_{k}}\right|\rightarrow \infty$, $\left|W^{(i)}_{m, -n_{k}}\right|\rightarrow 0$ as $k \rightarrow \infty$,  and

$\;\;$        $(b)$ for each $\varepsilon > 0$, integers $K$, $M\in \mathbb{N}$, and finite collection $\{a^{(i)}_{j}: -M\leq j \leq M, 1\leq i\leq N\}$ of non-zero scalars in $\mathbb{K}\setminus \{0\}$, there exists an integer $k\geq K$ such that for any integers $i, \ell$ with $1\leq i, \ell \leq N$ and $i\neq \ell$, we have

\[\left|\frac{W^{(i)}_{f^{-n_{k}}_{i}(j),n_{k}}}{W^{(\ell)}_{f^{-n_{k}}_{\ell}(j),n_{k}}}\right|< \varepsilon \textnormal{ whenever } j\in f^{n_{k}}_{\ell}([M])\cap f^{n_{k}}_{i}(\mathbb{Z}\setminus [M]), \]
and

\[\left|\frac{W^{(i)}_{f^{-n_{k}}_{i}(j),n_{k}}}{W^{(\ell)}_{f^{-n_{k}}_{\ell}(j),n_{k}}}-\frac{a^{(i)}_{f^{-n_{k}}_{i}(j)}}
{a^{(\ell)}_{f^{-n_{k}}_{\ell}(j)}}\right| < \varepsilon \textnormal{ whenever } j\in f^{n_{k}}_{\ell}([M])\cap f^{n_{k}}_{i}([M]),\]
where $[M]=\{-M,-M+1,...,0,...,M\}$.
Then the pseudo-shifts $T_{1}$,....$T_{N}$ satisfy the Disjoint Blow-up/Collapse Criterion. In particular, they are disjoint hypercyclic.
\end{theorem}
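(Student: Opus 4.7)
The plan is to verify the equivalent formulation of the Disjoint Blow-up/Collapse Criterion stated in Remark \ref{rmk2.2}, using the dense subset $X_{0}=\mathrm{span}\{e_{m}:m\in\mathbb{Z}\}$. Fix $\varepsilon>0$, integers $K,L\in\mathbb{N}$, and vectors $y_{1},\dots,y_{L},x_{1},\dots,x_{N}\in X_{0}$, and choose $M$ large enough that each of them is supported in $[M]=\{-M,\dots,M\}$. Because Lemma \ref{lemma2.3} requires the coefficients of each $x_{i}=\sum_{m=-M}^{M}a^{(i)}_{m}e_{m}$ to be nonzero, we first replace any vanishing $a^{(i)}_{m}$ by an arbitrarily small nonzero scalar; the resulting perturbation is absorbed into the final $\varepsilon$.

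The small-orbit condition is immediate from (a): writing $y_{\ell}=\sum_{m=-M}^{M}b^{(\ell)}_{m}e_{m}$, each $T_{i}^{n_{k}}e_{m}$ is a scalar multiple of a single basis vector whose norm equals $|W^{(i)}_{m,-n_{k}}|$, and this quantity tends to $0$ by the second part of (a). Since the set of triples $(i,\ell,m)$ is finite, a single large enough $k\geq K$ yields $\|T_{i}^{n_{k}}y_{\ell}\|<\varepsilon$ uniformly in $i$ and $\ell$.

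The construction of $z$ then comes from Lemma \ref{lemma2.3} applied with $n=n_{k}$: this produces $z\in X_{0}$ together with three explicit upper bounds, each of the form $(2M+1)N\Gamma$ times a maximum over finite index sets. The bound on $\|z\|$ involves $|W^{(\ell)}_{m,n_{k}}|$ (in the appropriate reciprocal sense) and is forced below $\varepsilon$ by the first part of (a); the bound on $\|T_{1}^{n_{k}}z-x_{1}\|$ is a maximum of ratios $\bigl|W^{(1)}_{f_{1}^{-n_{k}}(j),n_{k}}/W^{(\ell)}_{f_{\ell}^{-n_{k}}(j),n_{k}}\bigr|$ over $j\in f_{\ell}^{n_{k}}([M])\cap f_{1}^{n_{k}}(\mathbb{Z}\setminus[M])$, controlled by the first inequality of (b); and for $i\geq 2$ the bound on $\|T_{i}^{n_{k}}z-x_{i}\|$ splits into two maxima, one of the same ratio type over $j\in f_{\ell}^{n_{k}}([M])\cap f_{i}^{n_{k}}(\mathbb{Z}\setminus[M])$ and one of the difference type $\bigl|W^{(i)}/W^{(\ell)}-a^{(i)}/a^{(\ell)}\bigr|$ over $j\in f_{\ell}^{n_{k}}([M])\cap f_{i}^{n_{k}}([M])$, handled respectively by the first and second inequalities of (b). Because all maxima are over finite sets and the hypotheses (a) and (b) are uniform over finite data, a single $k\geq K$ can be chosen so that all three bounds lie below $\varepsilon$, verifying the conditions of Remark \ref{rmk2.2} and hence, via Theorem \ref{them2.1}, yielding disjoint hypercyclicity of $T_{1},\dots,T_{N}$. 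The main conceptual work is already carried out inside Lemma \ref{lemma2.3}; the remaining obstacle is pure bookkeeping, matching each index $j$ occurring in the three maxima to the precise set to which condition (b) applies.
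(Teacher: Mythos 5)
Your proposal is correct and follows essentially the same route as the paper: reduce to the equivalent formulation in Remark \ref{rmk2.2} with $X_{0}=\mathrm{span}\{e_{m}:m\in\mathbb{Z}\}$, perturb to nonzero coefficients, use condition (a) to kill the $T_{i}^{n_{k}}y_{\ell}$, and feed conditions (a) and (b) into the three bounds of Lemma \ref{lemma2.3} for a single sufficiently large $k\geq K$. Your parenthetical observation that the $\|z\|$ bound must be read ``in the appropriate reciprocal sense'' matches how the paper actually uses it (it requires $|W^{(i)}_{m,n}|>(2M+1)N\Gamma/\varepsilon$), so no substantive difference remains.
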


\begin{proof}
We prove this theorem in a similar way with the proof of Theorem 2.4 of \cite{CMS20}.
To show that the bilateral pseudo-shifts $T_{1},...,T_{N}$ satisfy the Disjoint Blow-up/Collapse Criterion, it is sufficient to prove that they satisfy the equivalent condition given in Remark \ref{rmk2.2}. As the proof of Theorem 2.4 of \cite{CMS20}, let $X_{0}=span\{e_{m}:m\in \mathbb{Z}\}$, which is dense in $\ell^{p}(\mathbb{Z})$. For any $\varepsilon >0$, integers $K, L\in \mathbb{N}$ and vectors $y_{1},...,y_{L}$,$x_{1},...,x_{N}\in X_{0}$, we can write each vector $x_{i}$ as

\[x_{i}=\sum^{M}_{m=-M}a^{(i)}_{m}e_{m}, \textrm{ where } M=\max\{deg(y_{\ell}),deg(x_{i}):1\leq \ell\leq L, 1\leq i\leq N\}. \]

Without loss of generality, we may assume $a^{(i)}_{-M},...,a^{(i)}_{M}\neq 0$ for integers $1\leq i\leq N$ by a similar argument of Theorem 2.4 of \cite{CMS20}.

Since for any integer $i\in \mathbb{N}$ with $1\leq i\leq N$ and $m\in \mathbb{Z}$, $\left|W^{(i)}_{m, -n_{k}}\right|\rightarrow 0$ as $k \rightarrow \infty$, for any $1\leq \ell \leq L$ and any $1\leq i\leq N$, $T^{n_{k}}_{i}y_{\ell}\rightarrow 0$ as $k \rightarrow \infty$.
Then there exists an integer $M'$ such that for any $k>M'$ and for any $1\leq \ell \leq L$ and $1\leq i\leq N$, $\|T^{n_{k}}_{i}y_{\ell}\|<\varepsilon$.

From the condition $(a)$ and $(b)$, there exists an integer $n=n_{k}$ with $k\geq \max\{K, M, M'\}$ such that

\[\left|W^{(i)}_{m,n}\right|> \frac{(2M+1)N\Gamma}{\varepsilon}, \textrm{ for integers } 1\leq i\leq N \textrm{ and } -M\leq m\leq M,\]

\[\left|\frac{W^{(i)}_{f^{-n}_{i}(j),n}}{W^{(\ell)}_{f^{-n}_{\ell}(j),n}}\right|< \frac{\varepsilon}{2(2M+1)N\Gamma}, \textrm{ whenever } i\neq \ell \textrm{ and } j\in f^{n}_{\ell}([M]) \cap f^{n}_{i}(\mathbb{Z}\setminus [M]),\]

\[\left|\frac{W^{(i)}_{f^{-n}_{i}(j),n}}{W^{(\ell)}_{f^{-n}_{\ell}(j),n}}- \frac{a^{(i)}_{f^{-n}_{i}(j)}}{a^{(\ell)}_{f^{-n}_{\ell}(j)}}\right|< \frac{\varepsilon}{2(2M+1)N\Gamma}, \textrm{ whenever } i\neq \ell \textrm{ and } j\in f^{n}_{\ell}([M]) \cap f^{n}_{i}([M]),\]
where $\Gamma=\max\{\|x_{i}\|:1\leq i\leq N\}$. By Lemma \ref{lemma2.3} with the integers $M$, $n=n_{k}$ and vectors $x_{1},...,x_{N}$, there is a vector $z\in X_{0}$ which satisfies the inequalities in Lemma \ref{lemma2.3}.
Then we get

\[\|z\| \leq (2M+1)N\Gamma \max\{|W^{(\ell)}_{m,n}|: 1\leq \ell \leq N, 1\leq m\leq M\}< \varepsilon,\]
\[\|T^{n}_{1}z-x_{1}\| \leq (2M+1)N\Gamma \max\left\{\left|\frac{W^{(1)}_{f^{-n}_{1}(j),n}}{W^{(\ell)}_{f^{-n}_{\ell}(j),n}}\right|: 2\leq \ell \leq N, j\in f^{n}_{\ell}([M])\cap f^{n}_{1}(\mathbb{Z}\setminus[M])\right\}<\varepsilon,\]
and for each integer $i$ with $2\leq i\leq N$, we have

\[\|T^{n}_{i}z-x_{i}\|\leq (2M+1)N\Gamma\max\left\{\left|\frac{W^{(i)}_{f^{-n}_{i}(j),n}}{W^{(\ell)}_{f^{-n}_{\ell}(j),n}}-
\frac{a^{(i)}_{f^{-n}_{1}(j)}}{a^{(\ell)}_{f^{-n}_{\ell}(j)}}\right|:1\leq \ell \leq i-1, j\in f^{n}_{\ell}{([M])}\cap f^{n}_{i}{([M])}\right\}\]
\[+(2M+1)N\Gamma\max\left\{\left|\frac{W^{(i)}_{f^{-n}_{i}(j),n}}{W^{(\ell)}_{f^{-n}_{\ell}(j),n}}\right|: \ell\neq i, j\in f^{n}_{\ell}{([M])}\cap f^{n}_{i}{(\mathbb{Z}\setminus[M])}\right\}<\frac{\varepsilon}{2}+\frac{\varepsilon}{2}<\varepsilon,\]
which concludes the proof.
\end{proof}

Using Theorem \ref{them2.4}, we can provide some examples of invertible pseudo-shifts on $\ell^{p}(\mathbb{Z})$, which are disjoint hypercylic.

\begin{example}\label{ex2.5}
Let define $T_{i}=T_{f_{i}, w^{(i)}}$ ($1\leq i\leq N$) on $\ell^{p}(\mathbb{Z})$  with $1\leq p <\infty$ as follows:
$f_{i}$ is given by $f_{i}(n)=n+p_{i}$, $n\in \mathbb{Z}$ where $p_{1},...,p_{N}$ are positive integers such that for every $1\leq s< t \leq N$, $2p_{s}<p_{t}$ and $w^{(i)}=(w^{(i)}_{n})_{n\in \mathbb{Z}}$ is given by
\begin{displaymath}
w^{(i)}_{n} = \left\{ \begin{array}{ll}
\lambda_{i} & \textrm{if $n>l_{i}$},\\
\frac{1}{\lambda_{i}} & \textrm{if $n\leq l_{i}$},
\end{array} \right.
1\leq i\leq N,
\end{displaymath}
where $l_{1},...,l_{N}$ are arbitrary integers and $\lambda_{1},...,\lambda_{N}$ are real numbers such that for every $1\leq s< t \leq N$, $1<|\lambda_{s}|<|\lambda_{t}|$.

From the definition of $T_{i}=T_{f_{i}, w^{(i)}}$ ($1\leq i\leq N$), we can easily check that $T_{i}=T_{f_{i}, w^{(i)}}$ ($1\leq i\leq N$)  are invertible operators.
In fact, setting $g_{i}(n)=n-p_{i}$ and $v^{(i)}_{n}=\frac{1}{w^{(i)}_{n+p_{i}}}$ gives $T^{-1}_{i}=T_{g_{i}, v^{(i)}}$ which is also a pseudo-shift.

Now we can see the fact that $T_{i}=T_{f_{i}, w^{(i)}}$ ($1\leq i\leq N$) are disjoint hypercyclic by showing that $T_{1},...,T_{N}$ satisfy the conditions of Theorem \ref{them2.4}.

We take the sequence $(k)_{k\in\mathbb{N}}$ as a increasing sequence $(n_{k})_{k\in \mathbb{N}}$.
Since $\left|W^{(i)}_{m, k}\right|\rightarrow \infty$, $\left|W^{(i)}_{m, -k}\right|\rightarrow 0$ as $k \rightarrow \infty$, the condition (a) of Theorem \ref{them2.4} is satisfied.
Now it is sufficient to prove that for any $\varepsilon > 0$, integers $K$, $M\in \mathbb{N}$, there exists an integer $k\geq K$ such that for any integers $i, \ell$ with $1\leq i, \ell \leq N$ and $i\neq \ell$, we have $f^{k}_{\ell}([M])\cap f^{k}_{i}([M])=\emptyset$ and

\[\left|\frac{W^{(i)}_{f^{-k}_{i}(j),k}}{W^{(\ell)}_{f^{-k}_{\ell}(j),k}}\right|< \varepsilon \textnormal{ whenever } j\in f^{k}_{\ell}([M])\cap f^{k}_{i}(\mathbb{Z}\setminus [M]). \]

Since $f^{k}_{i}([M])=\{-M+kp_{i},...,M+kp_{i}\}$, if $k>2M$ then $f^{k}_{\ell}([M])\cap f^{k}_{i}([M])=\emptyset$.

Let $\gamma=\max\{\frac{|\lambda_{s}|}{|\lambda_{s+1}|}: s<N\}$, $\alpha=\min\{|\lambda_{s}|:1\leq s\leq N\}$, $\beta=\max\{|\lambda_{s}|:1\leq s\leq N\}$ and $L=\max\{|l_{s}|:1\leq s\leq N\}$.
Then $\alpha, \beta>1$ and $\gamma<1$.

In the case of $\ell>i$, if $k>\frac{\ln\varepsilon-\ln\{\beta^{2\frac{M+L}{p_{1}}}\}}{\ln\gamma}$ and $t\in f^{k}_{\ell}([M])\cap  f^{k}_{i}(\mathbb{Z}\setminus [M])$ then
\[\left|\frac{W^{(i)}_{f^{-k}_{i}(t), k}}{W^{(\ell)}_{f^{-k}_{\ell}(t), k}}\right|=\left|\frac{\prod^{k}_{j=1}w^{(i)}_{t-kp_{i}+jp_{i}}}{\prod^{k}_{j=1}w^{(\ell)}_{t-kp_{\ell}+jp_{\ell}}}\right|\leq\left|\frac{\lambda_{i}^{k}}{\lambda_{\ell}^{k-2\frac{M+l_{\ell}}{p_{\ell}}}}\right|\leq \gamma^{k}\left|\lambda_{\ell}^{2\frac{M+l_{\ell}}{p_{\ell}}}\right|\leq\gamma^{k}\beta^{2\frac{M+L}{p_{1}}}<\varepsilon.\]
And in the case of $i>\ell$, if $k>\frac{-\ln{\varepsilon}+\ln\{\beta^{\frac{4M+4L}{p_{1}}}\}}{\ln\alpha}$ and $t\in f^{k}_{\ell}([M])\cap  f^{k}_{i}(\mathbb{Z}\setminus [M])$ then
\[\left|\frac{W^{(i)}_{f^{-k}_{i}(t),k}}{W^{(\ell)}_{f^{-k}_{\ell}(t),k}}\right|=
\left|\frac{\prod^{k}_{j=1}w^{(i)}_{t-kp_{i}+jp_{i}}}{\prod^{k}_{j=1}w^{(\ell)}_{t-kp_{\ell}+jp_{\ell}}}\right|
\leq\left|\frac{\lambda_{i}^{k-2\frac{l_{i}-\{M+kp_{\ell}-kp_{i}\}}{p_{i}}}}{\lambda_{\ell}^{k-2\frac{M+l_{\ell}}{p_{\ell}}}}\right|
=\]
\[=\left|\frac{\lambda_{i}^{k-2\frac{kp_{i}-kp_{\ell}-M+l_{i}}{p_{i}}}}{\lambda_{\ell}^{k-2\frac{M+l_{\ell}}{p_{\ell}}}}\right|
=\left|\frac{\lambda_{\ell}^{2\frac{M+l_{\ell}}{p_{\ell}}}\lambda_{i}^{2\frac{M-l_{i}}{p_{i}}}\lambda_{i}^{\frac{k(2p_{\ell}-p_{i})}{p_{i}}}}{\lambda_{\ell}^{k}}\right|
\leq\left|\frac{\lambda_{\ell}^{2\frac{M+l_{\ell}}{p_{\ell}}}\lambda_{i}^{2\frac{M-l_{i}}{p_{i}}}}{\lambda_{\ell}^{k}}\right|
\leq \frac{\beta^{\frac{4M+4L}{p_{1}}}}{\alpha^{k}}<\varepsilon.\]
So we get that the operators $T_{1},...,T_{N}$ are disjoint hypercyclic.
And similarly we can check $T^{-1}_{1},...T^{-1}_{N}$ are also disjoint hypercyclic.
\end{example}

Above the example, all the operators $T_{i}=T_{f_{i}, w^{(i)}}$ ($1\leq i\leq N$) can be described as powers of some bilateral weighted shifts. In the following example, we will provide bilateral pseudo-shifts, which one of them can not be decribed as powers of weighted shifts, but they are disjoint hypercyclic.
Here, we will discuss our result on the space $\ell^{p}(\mathbb{Z}\setminus\{0\})$ for the convenience of simbols.
 
\begin{example}
Let $T_{f_{1}, w^{(1)}}$, $T_{f_{2}, w^{(2)}}$ be the bilateral pseudo-shifts on $\ell^{p}(\mathbb{Z}\setminus{\{0\}})$ with $1\leq p <\infty$ induced by the invertible maps $f_{1},f_{2}:\mathbb{Z}\setminus{\{0\}}\rightarrow \mathbb{Z}\setminus{\{0\}}$ given by
\begin{displaymath}
f_{1}(i) = \left\{ \begin{array}{ll}
1 & \textrm{if $i=-1$},\\
i+1 & \textrm{if $i\neq -1$},
\end{array} \right.
\end{displaymath}
and $f_{2}(2^{i}(2j+1))=2^{i}(2j+3)$ for $i\geq 0$, $j\in\mathbb{Z}$
and weighted sequences $w^{(1)}$, $w^{(2)}$ given by
	
\begin{displaymath}
w^{(1)}_{i} = \left\{ \begin{array}{ll}
\lambda_{1} & \textrm{if $i>0$},\\
\frac{1}{\lambda_{1}} & \textrm{if $i<0$},
\end{array} \right.
\textrm{ and }
w^{(2)}_{i} = \left\{ \begin{array}{ll}
\lambda_{2} & \textrm{if $i>0$},\\
\frac{1}{\lambda_{2}} & \textrm{if $i<0$},
\end{array} \right.
\end{displaymath}
where $\lambda_{1}$, $\lambda_{2}$ are real numbers such that $|\lambda_{2}|>|\lambda_{1}|>1$.
	
Then we can easily check that $f_{1}$, $f_{2}$ are invertible on $\mathbb{Z}\setminus{\{0\}}$ and for every $m\in \mathbb{Z}\setminus{\{0\}}$, $\lim_{n\rightarrow+\infty}f^{-n}_{1}(m)=\lim_{n\rightarrow+\infty}f^{-n}_{2}(m)=-\infty$ and $\lim_{n\rightarrow+\infty}f^{n}_{1}(m)=\lim_{n\rightarrow+\infty}f^{n}_{2}(m)=+\infty$, which allows $\lim_{n \rightarrow +\infty}|W^{(1)}_{m,-n}|=\lim_{n \rightarrow +\infty}|W^{(2)}_{m,-n}|=0$ and $\lim_{n \rightarrow +\infty}|W^{(1)}_{m,n}|=\lim_{n \rightarrow +\infty}|W^{(2)}_{m,n}|=+\infty$
	
Let $\varepsilon>0$ be a real number and $K,M\in\mathbb{N}$.
Since $f_{2}(i)\geq i+2$ for every $i\in \mathbb{Z}\setminus{\{0\}}$, there exists a natural number $n_{1}\in \mathbb{N}$ such that for every $k>n_{1}$, $f^{k}_{1}([M])\cap f^{k}_{2}([M])=\emptyset$, where $[M]=\{-M,-M+1,...,-1,1,2,...,M\}$.
	
For $j\in f^{n}_{1}([M])$, we obtain $\left|W^{(1)}_{f^{-n}_{1}(j),n}\right|\geq \left|\lambda_{1}^{n}/\lambda_{1}^{-2M}\right|$ and $\left|W^{(2)}_{f^{-n}_{2}(j),n}\right|\leq \left|\lambda_{2}^{M+1}\right|$, which is allowed by the fact that $\sharp\{k | f^{-k}_{2}(j)>0, 0\leq k\leq n\} \leq (n+M)/2$ and $\sharp\{k|f^{-k}_{1}(j)<0, 0\leq k\leq n\}\leq M$, where $\sharp\{\cdotp\}$ is a cardinal number of $\{\cdotp\}$.  Then we get
\[\left|\frac{W^{(2)}_{f^{-n}_{2}(j),n}}{W^{(1)}_{f^{-n}_{1}(j),n}}\right|\leq\left|\frac{\lambda_{2}^{M+1}}{\lambda_{1}^{n-2M}}\right|.\]
And for $j\in f^{n}_{2}([M])$, $\left|W^{(2)}_{f^{-n}_{2}(j),n}\right|\geq \left|\lambda_{2}^{n}/\lambda_{2}^{-2M}\right|$ and $\left|W^{(1)}_{f^{-n}_{1}(j),n}\right|\leq \left|\lambda_{1}^{n}\right|$, then 
\[\left|\frac{W^{(1)}_{f^{-n}_{1}(j),n}}{W^{(2)}_{f^{-n}_{2}(j),n}}\right|\leq\left|\frac{\lambda_{1}^{n}}{\lambda_{2}^{n-2M}}\right|.\]
	
Since $\lim_{n \rightarrow \infty}\left|\frac{\lambda_{2}^{M+1}}{\lambda_{1}^{n-2M}}\right|=\lim_{n \rightarrow \infty}\left|\frac{\lambda_{1}^{n}}{\lambda_{2}^{n-2M}}\right|=0$, there exist a natural number $n_{2}\in \mathbb{N}$ such that $\left|\frac{\lambda_{2}^{M+1}}{\lambda_{1}^{n-2M}}\right|<\varepsilon$ and $\left|\frac{\lambda_{1}^{n}}{\lambda_{2}^{n-2M}}\right|<\varepsilon$.
	
Therefore for $k>\max\{K,n_{1},n_{2}\}$,
\[\left|\frac{W^{(2)}_{f^{-n}_{2}(j),n}}{W^{(1)}_{f^{-n}_{1}(j),n}}\right|<\varepsilon, \textrm{ for } j\in f^{n}_{1}([M]),\]
\[\left|\frac{W^{(1)}_{f^{-n}_{1}(j),n}}{W^{(2)}_{f^{-n}_{2}(j),n}}\right|<\varepsilon, \textrm{ for } j\in f^{n}_{2}([M])\]
and
\[ f^{n}_{1}([M])\cap f^{n}_{2}([M])=\emptyset.\]
	
From Theorem \ref{them2.4}, $T_{f_{1}, w^{(1)}}$ and $T_{f_{2}, w^{(2)}}$ are disjoint hypercyclic.
\end{example}

\section{Disjoint reiterative hypercyclicity}

In this section we discuss Question \ref{qest1.1}.  We give a negative answer to Question \ref{qest1.1} when $X$ is a reflexive Banach space.  First, we recall the following results.

\begin{proposition} \label{propo3.1}\textnormal{(\cite{GL22})}
Let $X$ be a reflexive Banach space and $T\in \mathcal{L}(X)$. Then $T$ is reiteratively recurrent if and only if $T$ admits a invariant probability measure with full support.
\end{proposition}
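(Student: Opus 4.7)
My plan is to prove the two implications separately; the ``if'' direction is a soft consequence of the pointwise ergodic theorem, while the ``only if'' direction carries the real content and is where reflexivity enters decisively.

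For the ``if'' direction, assume $\mu$ is a $T$-invariant Borel probability measure with $\operatorname{supp}(\mu)=X$. By the ergodic decomposition, for $\mu$-a.e.\ $x$ there is an ergodic invariant measure $\mu_{x}$ (the ergodic component at $x$) such that, for every open set $U\subset X$, Birkhoff's pointwise ergodic theorem gives
\[
\frac{1}{N}\sum_{n=0}^{N-1}\chi_{U}(T^{n}x)\longrightarrow \mu_{x}(U).
\]
In particular this natural density, and hence $\overline{Bd}(N(x,U))$, is positive whenever $\mu_{x}(U)>0$. Since Poincar\'e recurrence places $\mu$-a.e.\ $x$ in the support of its own ergodic component, every neighborhood $U$ of such an $x$ satisfies $\mu_{x}(U)>0$, so $\overline{Bd}(N(x,U))>0$. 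Thus $RRec(T)$ has full $\mu$-measure and, because $\operatorname{supp}(\mu)=X$, is dense in $X$, so $T$ is reiteratively recurrent.

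For the ``only if'' direction, the plan is a Krylov--Bogolyubov construction: build an invariant probability measure as a weak-$\ast$ cluster of empirical averages along windows witnessing the positive upper Banach densities. Fix a countable dense set $\{x_{k}\}\subset RRec(T)$ and, for each $k$, a decreasing basis $(U_{k,m})_{m}$ of bounded open neighborhoods of $x_{k}$; by hypothesis $c_{k,m}:=\overline{Bd}(N(x_{k},U_{k,m}))>0$. Unwinding the definition of upper Banach density, I extract intervals $[a_{j}+1,a_{j}+N_{j}]$ with $N_{j}\to\infty$ on which the density of $N(x_{k},U_{k,m})$ stays above $c_{k,m}/2$, and form the empirical measures
\[
\nu_{j}^{(k,m)} := \frac{1}{N_{j}}\sum_{n=a_{j}+1}^{a_{j}+N_{j}}\delta_{T^{n}x_{k}},
\]
each of which puts mass at least $c_{k,m}/2$ on the bounded set $U_{k,m}$.

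The main obstacle, and the place where reflexivity is essential, is extracting a weak-$\ast$ cluster measure that does not lose all its mass at infinity and is $T$-invariant. Reflexivity makes $\overline{U_{k,m}}$ weakly compact, so the restrictions of $\nu_{j}^{(k,m)}$ to $U_{k,m}$ are tight in the weak topology on $X$, and a Prokhorov-type extraction yields a cluster measure $\nu_{k,m}$ of total mass $\geq c_{k,m}/2$ concentrated near $x_{k}$; $T$-invariance follows from the standard boundary computation $\|\nu_{j}^{(k,m)}-T_{\ast}\nu_{j}^{(k,m)}\|=O(N_{j}^{-1})$ combined with the weak continuity of $T$. Normalizing and assembling
\[
\mu := \sum_{k,m} \alpha_{k,m}\,\frac{\nu_{k,m}}{\nu_{k,m}(X)}, \qquad \alpha_{k,m}>0,\quad \sum_{k,m}\alpha_{k,m}=1,
\]
produces a $T$-invariant Borel probability measure charging every neighborhood of every $x_{k}$; density of $\{x_{k}\}$ then forces $\operatorname{supp}(\mu)=X$, closing the argument.
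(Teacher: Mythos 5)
The paper does not prove this proposition at all: it is quoted directly from Grivaux and L\'opez-Mart\'inez \cite{GL22}, so there is no internal proof to compare against and your attempt has to stand on its own. Your ``if'' direction is essentially correct and standard (Birkhoff plus a countable basis of open sets shows that $\mu$-almost every point $x$ satisfies $\overline{Bd}(N(x,U))>0$ for every basic neighborhood $U$ with $\mu(U)>0$, and full support upgrades ``full measure'' to ``dense''); reflexivity is not needed there, and the ergodic decomposition is harmless decoration.

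The ``only if'' direction, however, has a genuine gap at the invariance step, which is precisely the hard part of the theorem. The total-variation estimate $\|\nu_{j}^{(k,m)}-T_{\ast}\nu_{j}^{(k,m)}\|=O(N_{j}^{-1})$ holds for the \emph{full} empirical measures, but those are not supported in any fixed weakly compact set (the orbit of $x_{k}$ is in general unbounded), so you have no compactness with which to extract a cluster point, and mass can escape to infinity. To regain compactness you restrict the empirical measures to the bounded set $U_{k,m}$, but the restricted measures are \emph{not} approximately $T$-invariant: the orbit enters and leaves $U_{k,m}$ a positive proportion of the time, so $T_{\ast}\bigl(\nu_{j}^{(k,m)}|_{U_{k,m}}\bigr)$ and $\nu_{j}^{(k,m)}|_{U_{k,m}}$ can differ by a fixed positive amount in total variation, and the cluster measure $\nu_{k,m}$ you obtain need not be invariant. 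In other words, you cannot simultaneously have the compactness (which forces restriction to a bounded set) and the approximate invariance (which only holds before restriction). The argument in \cite{GL22} resolves exactly this tension: it works with the full empirical measures on an exhaustion of $X$ by weakly compact balls, performs a diagonal extraction to obtain a limiting finite (a priori sub-probability) measure, and then proves invariance and positivity of the mass by a careful control of how much mass sits near the boundary of each ball, using reflexivity for weak compactness and metrizability of bounded sets and separability to identify the weak and norm Borel structures. None of that is captured by the sentence ``a Prokhorov-type extraction yields a cluster measure,'' so as written the construction of an invariant measure is not established.
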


\begin{proposition} \label{propo3.2}\textnormal{(Theorem 2.1 of \cite{BGLP22})}
An operator $T\in \mathcal{L}(X)$ is reiteratively hypercyclic if and only if $T$ is hypercyclic and reiteratively recurrent.
\end{proposition}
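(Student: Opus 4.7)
The plan is to treat the two implications in turn.

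For the forward implication, suppose $x$ is reiteratively hypercyclic. The orbit of $x$ is dense by definition, so $T$ is hypercyclic. To show $T$ is reiteratively recurrent I would verify that every iterate $T^m x$ lies in $RRec(T)$: given a neighborhood $V$ of $T^m x$, reiterative hypercyclicity applied to the nonempty open set $V$ yields $\overline{Bd}(N(x, V)) > 0$, and translation-invariance of $\overline{Bd}$ combined with the shift identity $N(T^m x, V) = N(x, V) - m$ (up to finitely many terms) gives $\overline{Bd}(N(T^m x, V)) > 0$. Hence the dense orbit of $x$ is contained in $RRec(T)$, so $T$ is reiteratively recurrent.

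For the reverse implication I would proceed in two preparatory steps. First, I would check that $RRec(T)$ is forward $T$-invariant: if $y \in RRec(T)$ and $W$ is a neighborhood of $Ty$, then $T^{-1}(W)$ is a neighborhood of $y$ by continuity of $T$, and the identity $N(Ty, W) = N(y, T^{-1}(W))$ together with $\overline{Bd}(N(y, T^{-1}(W))) > 0$ yields $\overline{Bd}(N(Ty, W)) > 0$. Second, I would establish the core observation that any $y \in HC(T) \cap RRec(T)$ belongs to $RHC(T)$: for a nonempty open $U$, hypercyclicity produces $m$ with $T^m y \in U$; by invariance $T^m y \in RRec(T)$, so for a small neighborhood $V \subset U$ of $T^m y$ one has $\overline{Bd}(N(T^m y, V)) > 0$, and translation-invariance transports this to $\overline{Bd}(N(y, U)) \geq \overline{Bd}(N(y, V)) > 0$.

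The remaining and substantive task is to produce a vector $y \in HC(T) \cap RRec(T)$. Here $HC(T)$ is a dense $G_\delta$ by Birkhoff transitivity, but $RRec(T)$ is only assumed to be dense. My plan is to upgrade $RRec(T)$ to a residual set by writing
\[
RRec(T) = \bigcap_{k \in \mathbb{N}} \bigl\{ x : \overline{Bd}\bigl(N(x, B(x, 1/k))\bigr) > 0 \bigr\}
\]
and analyzing each factor through the finite-window counts $\#(N(x, U) \cap [m+1, m+N])/N$ and continuity of $T$. The main obstacle is precisely this residuality upgrade: the iterated $\lim$--$\limsup$ in the definition of $\overline{Bd}$ obstructs a direct verification that each factor is open or $G_\delta$, so a finer return-set argument, presumably the one developed in \cite{BGLP22}, is required. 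Once residuality is in hand, Baire yields $HC(T) \cap RRec(T) \neq \emptyset$, and the core observation of the previous paragraph produces the desired reiteratively hypercyclic vector.
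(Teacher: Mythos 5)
The paper does not actually prove this proposition: it is quoted verbatim as Theorem~2.1 of \cite{BGLP22} and used as a black box, so there is no internal proof to compare against. Judged on its own terms, your proposal is sound in all the steps you carry out --- the forward implication (the orbit of a reiteratively hypercyclic vector lies in $RRec(T)$ via $N(T^m x,V)=N(x,V)-m$ and translation invariance of $\overline{Bd}$), the forward invariance of $RRec(T)$, and the observation that any $y\in HC(T)\cap RRec(T)$ is already reiteratively hypercyclic --- but it stops exactly at the one step that carries the entire content of the theorem, namely producing a vector in $HC(T)\cap RRec(T)$, and you explicitly defer that step to \cite{BGLP22}. As submitted, this is a genuine gap, not a complete proof.

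The gap is moreover not a routine one that your sketched decomposition would close. Writing $\overline{Bd}(A)>0$ as $\exists\,\delta>0\ \forall N\ \exists m:\ \#(A\cap[m+1,m+N])\ge\delta N$ does show that each set $\{x:\overline{Bd}(N(x,U))\ge\delta\}$ is $G_\delta$ for fixed open $U$ and fixed $\delta$, but $RRec(T)$ is then a countable intersection (over shrinking neighborhoods) of \emph{unions over $\delta$} of such sets, and a union of $G_\delta$ sets is in general neither $G_\delta$ nor amenable to Baire's theorem; the individual slices at fixed $\delta$ also have no reason to be dense. This is precisely why \cite{BGLP22} must prove a separate, nontrivial statement (that for upper Furstenberg families such as the reiterative one, density of the set of $\mathcal{F}$-recurrent vectors forces its residuality, via a uniformity argument pinning down a single $\delta$ per open set on a dense set of points) before the Baire intersection with $HC(T)$ can be taken. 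Since you identify the obstacle but do not supply that argument, the reverse implication remains unproved; to make the proposal self-contained you would need either that residuality lemma or a direct construction of a reiteratively hypercyclic vector from a hypercyclic vector and a dense set of reiteratively recurrent vectors.
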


\begin{proposition} \label{propo3.3}\textnormal{(Theorem 14 of \cite{BMPP16})}
If $T$ is reiteratively hypercyclic then its set of reiteratively hypercyclic vectors coincides with its set of hypercyclic vectors.
\end{proposition}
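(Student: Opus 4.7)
The inclusion $RHC(T)\subseteq HC(T)$ is immediate from the definitions, since $\overline{Bd}(N(x,U))>0$ in particular forces $N(x,U)\neq\emptyset$, so the orbit of any reiteratively hypercyclic vector is dense in $X$. Thus the content of the proposition lies in the reverse inclusion $HC(T)\subseteq RHC(T)$ under the standing hypothesis that $RHC(T)\neq\emptyset$.

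The structural observation I would exploit first is the translation-invariance of the upper Banach density. For any $x\in X$, $k\in\mathbb{N}$, and any non-empty open $U\subset X$,
\[ N(T^{k}x,U)\;=\;N(x,U)-k, \]
and since $\overline{Bd}$ is invariant under shifts of $\mathbb{N}$, we obtain $\overline{Bd}(N(T^{k}x,U))=\overline{Bd}(N(x,U))$. Consequently $RHC(T)$ is $T$-invariant in both directions: $x\in RHC(T)$ if and only if $T^{k}x\in RHC(T)$ for some (equivalently every) $k\in\mathbb{N}$. So if any single iterate of a hypercyclic vector can be shown to lie in $RHC(T)$, we are done.

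Given this, fix $x\in HC(T)$ and a vector $y\in RHC(T)$ guaranteed by the hypothesis. Choose a countable base $\{V_{j}\}_{j\in\mathbb{N}}$ of non-empty open sets of $X$, and set $\delta_{j}:=\overline{Bd}(N(y,V_{j}))>0$. The plan is to transfer the positive densities $\delta_{j}$ from the orbit of $y$ to that of $x$ using the density of $\{T^{n}x\}$: one selects an index $n_{0}=n_{0}(j)$ for which $T^{n_{0}}x$ lies in a neighbourhood of $y$ chosen small enough (depending on $V_{j}$ and on $\delta_{j}/2$, say) to guarantee that a positive fraction of the return times $n\in N(y,V_{j})$ also satisfy $T^{n_{0}+n}x\in V_{j}$. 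The $T$-invariance of the preceding paragraph then upgrades this membership for $T^{n_{0}}x$ to membership for $x$ itself, yielding $x\in RHC(T)$ after intersecting over $j$.

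The main obstacle, and the step that requires the most care, is that $\overline{Bd}$ is neither upper nor lower semicontinuous in the base point, so closeness of $T^{n_{0}}x$ to $y$ does not automatically propagate to closeness of $T^{n_{0}+n}x$ to $T^{n}y$ uniformly in $n$. My expectation is that this is circumvented by combining the $T$-invariance of $RHC(T)$ with the fact that $RHC(T)=\bigcap_{j}\{x:\overline{Bd}(N(x,V_{j}))>0\}$ is $T$-invariant and contains at least one orbit; a Baire-category style saturation argument, applied to each $V_{j}$ separately and using that the orbit of $x$ hits every open set including arbitrarily small neighbourhoods of $y$, should then yield that every hypercyclic vector belongs to every member of this countable intersection, giving the desired inclusion $HC(T)\subseteq RHC(T)$.
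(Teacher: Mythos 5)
The paper does not prove this statement itself --- it quotes it as Theorem 14 of \cite{BMPP16} --- so the comparison is against the standard argument there. Your proposal correctly handles the easy inclusion $RHC(T)\subseteq HC(T)$ and correctly records the shift-invariance $\overline{Bd}(N(T^{k}x,U))=\overline{Bd}(N(x,U))$, but the reverse inclusion is left with a genuine gap, and you in fact name it yourself: closeness of $T^{n_{0}}x$ to $y$ does not control $T^{n_{0}+n}x$ uniformly over the infinitely many return times $n\in N(y,V_{j})$, and $\overline{Bd}(N(\cdot,V_{j}))$ has no useful semicontinuity. The proposed fix --- ``a Baire-category style saturation argument'' over the sets $\{x:\overline{Bd}(N(x,V_{j}))>0\}$ --- does not work as stated: these sets are neither open nor $G_{\delta}$ in any evident way, they are not known to be residual, and Baire category gives no leverage on them. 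So the plan stalls exactly at the step that carries all the content.

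The missing idea is to localize to one long window at a time, where the return set becomes \emph{finite} and the uniformity problem disappears. Fix a non-empty open $U$ and $y\in RHC(T)$, and set $\delta:=\overline{Bd}(N(y,U))>0$. For arbitrarily large $N$ there is an interval $[m+1,m+N]$ with $F:=N(y,U)\cap[m+1,m+N]$ of cardinality at least $\tfrac{\delta}{2}N$. Since $F$ is finite, $W:=\bigcap_{n\in F}T^{-n}(U)$ is a non-empty open set (it contains $y$). If $x\in HC(T)$, choose $k$ with $T^{k}x\in W$; then $k+F\subseteq N(x,U)$, so $N(x,U)$ contains at least $\tfrac{\delta}{2}N$ integers inside an interval of length $N$. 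As $N$ can be taken arbitrarily large with the same $\delta$, $\overline{Bd}(N(x,U))\geq\tfrac{\delta}{2}>0$. This uses only the density of the orbit of $x$ and the translation-invariance of $\overline{Bd}$ that you already established, and it is the argument of \cite{BMPP16}; no continuity of $\overline{Bd}$ in the base point and no category argument are needed.
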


Next theorem shows that reiteratively hypercyclic operators on a reflexive Banach space are disjoint hypercyclic if and only if they are disjoint reiteratively hypercyclic, which gives a negative answer partially to Question \ref{qest1.1}.

\begin{theorem} \label{them3.2}

Let $X$ be a reflexive Banach space and $N\geq2$ be a positive integer, $T_{1},...,T_{N}$ be continuous linear operators on $X$.
Then the following assertions are equivalent.

$(1)$ $T_{1},...,T_{N}$ are disjoint hypercyclic and they are respectively reiteratively hypercyclic operators.

$(2)$ $T_{1},...,T_{N}$ are disjoint reiteratively hypercyclic.

\end{theorem}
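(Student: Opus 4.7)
The plan is to reduce the theorem to Propositions \ref{propo3.1}, \ref{propo3.2}, and \ref{propo3.3} by working with the direct sum $T := T_1 \oplus \cdots \oplus T_N$ on the product space $X^N$, which is again reflexive.

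The direction $(2) \Rightarrow (1)$ is immediate: a disjoint reiteratively hypercyclic vector $x$ is in particular a disjoint hypercyclic vector for $T_1, \ldots, T_N$, and for each fixed $i$ and non-empty open $V \subset X$ the slab $U = X \times \cdots \times V \times \cdots \times X$ (with $V$ in the $i$-th slot) satisfies $N((x,\ldots,x), U) = N(x, V; T_i)$, which has positive upper Banach density, so each $T_i$ is reiteratively hypercyclic.

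For the main direction $(1) \Rightarrow (2)$, I would first show that $T = T_1 \oplus \cdots \oplus T_N$ is itself reiteratively hypercyclic on $X^N$. By Proposition \ref{propo3.2} it suffices to check that $T$ is hypercyclic and reiteratively recurrent. Hypercyclicity of $T$ is free from the disjoint hypercyclicity hypothesis: any disjoint hypercyclic vector $(x,\ldots,x)$ is in particular a hypercyclic vector for $T$. For reiterative recurrence, use reflexivity of $X$ (hence of $X^N$) together with Proposition \ref{propo3.1}: each $T_i$ is reiteratively hypercyclic, hence reiteratively recurrent, hence admits an invariant Borel probability measure $\mu_i$ with full support. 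The product measure $\mu := \mu_1 \otimes \cdots \otimes \mu_N$ on $X^N$ then has full support (since product of full-support measures has full support on the product topology) and is $T$-invariant, because
\[(T_1 \oplus \cdots \oplus T_N)^{-1}(A_1 \times \cdots \times A_N) = T_1^{-1}(A_1) \times \cdots \times T_N^{-1}(A_N)\]
so $\mu(T^{-1}E) = \mu(E)$ on product-measurable rectangles, and thus on all Borel sets. Proposition \ref{propo3.1} applied on $X^N$ gives that $T$ is reiteratively recurrent.

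With $T$ now known to be reiteratively hypercyclic on $X^N$, Proposition \ref{propo3.3} yields $RHC(T) = HC(T)$. Any disjoint hypercyclic vector $(x,\ldots,x)$ for $T_1,\ldots,T_N$ lies in $HC(T)$, hence in $RHC(T)$, i.e.\ is disjoint reiteratively hypercyclic. The only subtle point is verifying that the product measure construction is legitimate, for which reflexivity of $X^N$ and the invariance identity above are the key ingredients; the main ``obstacle,'' if any, is simply recognizing that $(1) \Rightarrow (2)$ can be lifted from the factors to the direct sum by this product-measure trick rather than handled coordinate by coordinate.
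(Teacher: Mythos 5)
Your proposal is correct and follows essentially the same route as the paper: reduce to the direct sum on the reflexive space $X^N$, obtain an invariant full-support measure for it from the product of the measures given by Proposition \ref{propo3.1}, deduce reiterative recurrence and hence reiterative hypercyclicity via Proposition \ref{propo3.2}, and conclude with Proposition \ref{propo3.3}. Your write-up in fact supplies some details (invariance and full support of the product measure) that the paper leaves implicit.
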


\begin{proof}
$(2)\Rightarrow(1)$ is obvious.

To show that $(1)$ implies $(2)$, let $x$ be a disjoint hypercyclic vector.
First we can easily deduce that $T_{1},...,T_{N}$ admit invariant probability measures $\mu_{1},...,\mu_{N}$ with full support from Proposition \ref{propo3.1}.
So the operator $T_{1} \oplus\cdots\oplus T_{N}$ on $X^{N}$ admits a invariant probability measure with full support.

Since $X^{N}$ is a reflexive space and $T_{1} \oplus\cdots\oplus T_{N}$ admits a invariant probability measure with full support, then $T_{1}\oplus\cdots\oplus T_{N}$ is reiteratively recurrent and therefore it is reiteratively hypercyclic on $X^{N}$ from Proposition \ref{propo3.2}.
From Proposition \ref{propo3.3}, ($x$,...,$x$) is a reiteratively hypercyclic vector and thus $T_{1}$,..., $T_{N}$ are disjoint reiteratively hypercyclic which concludes the proof.
\end{proof}

Martin, Menet and  Puig \cite{MMP22} showed that for two reiteratively hypercyclic unilateral pseudo-shifts  on $\ell^{p}(\mathbb{N})$, $1\leq p <\infty$, if they have the same inducing map, they are disjoint hypercyclic if and only if they are disjoint reiteratively hypercyclic (Theorem 4.3 in \cite{MMP22}). Using Theorem \ref{them3.2}, we know if $1< p <\infty$, then this fact hold without the assuming that they have the same inducing map as follows:

\begin{corollary}\label{cor3.1}
Let $X=\ell^{p}(\mathbb{N})$ ($\ell^{p}(\mathbb{Z})$), $1< p <\infty$ and $N>2$. For each integer $i$ with $1\leq i \leq N$, let $T_{i}=T_{f_{i},w_{i}}$ be unilateral (bilateral) pseudo-shifts on $X$. Then $T_{1},...,T_{N}$ are disjoint reiteratively hypercyclic if and only if $T_{1},...,T_{N}$ are reiteratively hypercyclic and disjoint hypercyclic.
\end{corollary}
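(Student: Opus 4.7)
The plan is essentially to observe that Corollary \ref{cor3.1} is an immediate specialisation of Theorem \ref{them3.2}, once one checks that the ambient space fits the hypothesis of that theorem. Concretely, I would begin by recalling that $\ell^p(\mathbb{N})$ and $\ell^p(\mathbb{Z})$ are reflexive Banach spaces for every $1<p<\infty$ (this is standard: their duals are $\ell^{q}$ with $\frac1p+\frac1q=1$, and the canonical map into the bidual is surjective). Thus the hypothesis ``$X$ is a reflexive Banach space'' in Theorem \ref{them3.2} is satisfied for both $X=\ell^p(\mathbb{N})$ and $X=\ell^p(\mathbb{Z})$.

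Next I would note that each pseudo-shift $T_i=T_{f_i,w_i}$ is a continuous linear operator on $X$ (this is part of the standing setup in Definition 1.3, where $w=(w_i)_{i\in\mathbb{Z}}$ is assumed bounded, which makes $T_{f,w}\in\mathcal{L}(X)$). Consequently the tuple $T_1,\ldots,T_N$ satisfies all the structural hypotheses required to feed it into Theorem \ref{them3.2}.

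At this point the conclusion is immediate: by Theorem \ref{them3.2}, assertions (1) and (2) there are equivalent, which when translated into the language of the corollary reads exactly ``$T_1,\ldots,T_N$ are disjoint reiteratively hypercyclic if and only if each $T_i$ is reiteratively hypercyclic and the tuple is disjoint hypercyclic''. The direction $(2)\Rightarrow(1)$ is trivial (a disjoint reiteratively hypercyclic tuple is in particular disjoint hypercyclic, and each component is reiteratively hypercyclic), while $(1)\Rightarrow(2)$ is the substantive content that we import from Theorem \ref{them3.2}.

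There is no real obstacle; the only thing worth emphasising in the write-up is the reflexivity of $\ell^p$ for $1<p<\infty$, since this is precisely the hypothesis that fails at $p=1$ and explains why the statement of the corollary excludes that endpoint (as well as the $c_0$ case, which is the setting of Question \ref{qest1.2}). Everything else is a direct application of the already-proved Theorem \ref{them3.2}, so the proof should be short.
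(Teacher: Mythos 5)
Your proposal is correct and matches the paper's (implicit) argument exactly: the paper likewise derives the corollary by observing that $\ell^{p}(\mathbb{N})$ and $\ell^{p}(\mathbb{Z})$ are reflexive for $1<p<\infty$ and then applying Theorem \ref{them3.2} directly. Your added remark on why $p=1$ and $c_{0}$ are excluded is a sensible clarification but does not change the route.
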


\end{document}